\theoremstyle{definition}
\def\fnum{equation}
\newtheorem{Thm}[\fnum]{Theorem}
\newtheorem{Cor}[\fnum]{Corollary}
\newtheorem{Lem}[\fnum]{Lemma}
\numberwithin{equation}{section}
\newcommand{\Vol}{{\text{Vol}}}
\newcommand{\V}{{\text{V}}}
\newcommand{\Ric}{{\text{Ric}}}
\newcommand{\Ker}{{\text{Ker}}}
\newcommand{\cK}{{\mathcal{K}}}
\def\RR{{\bold R}}
\newcommand{\cA}{{\mathcal{A}}}
\newcommand{\cH}{{\mathcal{H}}}
\newcommand{\cP}{{\mathcal{P}}}
\newcommand{\eqr}[1]{(\ref{#1})}
\title[Optimal bounds for ancient caloric functions]{Optimal bounds for ancient caloric functions}
\author[]{Tobias Holck Colding}%
\address{MIT, Dept. of Math.\\
77 Massachusetts Avenue, Cambridge, MA 02139-4307.}
\author[]{William P. Minicozzi II}%
\thanks{The  authors
were partially supported by NSF Grants DMS 1812142 and DMS 1707270.}
\email{colding@math.mit.edu  and minicozz@math.mit.edu}
\begin{document}

\maketitle

\begin{abstract}
For any manifold with polynomial volume growth, we show:  The dimension of the space of ancient caloric functions with polynomial growth is bounded by the degree of growth times the dimension of harmonic functions with the same  growth.  As a consequence, we get a sharp bound for the dimension of ancient caloric functions on any space where Yau's 1974 conjecture about polynomial growth harmonic functions holds.
\end{abstract}

\section{Introduction}

Given a complete  manifold $M$ and a    constant $d$,  $\cH_d(M)$ is the linear space of harmonic functions of polynomial growth at most $d$.  Namely, 
  $u \in \cH_d (M)$ if $\Delta u = 0$ and for some $p \in M$ and a constant $C_u$ depending on $u$
\begin{align}
	\sup_{B_R(p)} |u| \leq C_u\, (1 + R)^d {\text{ for all }} R \, .
\end{align}

In 1974, S.T. Yau conjectured that $\cH_d (M)$ is finite dimensional for each $d$ when $\Ric_M\geq 0$.  
  The conjecture  was settled in \cite{CM2}; see \cite{CM1}--\cite{CM5} for more results.\footnote{For Yau's 1974 conjecture see:
   page $117$ in \cite{Ya2}, problem $48$ in \cite{Ya3}, Conjecture $2.5$  in \cite{Sc}, \cite{Ka}, \cite{Kz}, \cite{DF}, Conjecture $1$ in \cite{Li1}, and problem (1) in \cite{LiTa}, amongst others.}  In fact, \cite{CM2}--\cite{CM4} proved finite dimensionality under much weaker assumptions of:
\begin{enumerate}
\item  A volume doubling bound.
\item A scale-invariant Poincar\'e inequality or meanvalue inequality.
\end{enumerate}

The natural parabolic generalization   is a polynomial growth ancient solution of the heat equation.   A solution of the heat equation is often called a caloric function.  Ancient solutions are ones that are defined for all negative $t$ - these are the solutions that arise in a blow up analysis.
Given $d>0$,  $u \in \cP_d (M)$ if $u$ is ancient,  $\partial_t u = \Delta \, u$ and for some $p \in M$ and a constant $C_u$  
\begin{align}
	\sup_{ B_R(p) \times [-R^2,0]}\, |u| \leq C_u \, \left( 1 + R \right)^d {\text{ for all $R$}} \, .
\end{align}
On $\RR^n$,  $ \cP_d $ is the classical space of caloric polynomials that generalize the Hermite polynomials; see \cite{N}, \cite{E1}, \cite{E2}.
 More generally, the spaces $\cP_d(M)$ play a fundamental role in geometric flows, see \cite{CM6}--\cite{CM8}.   They were studied by Calle in her 2006 thesis, \cite{Ca1}, \cite{Ca2},   in the context of mean curvature flow.

A manifold has polynomial volume growth if there are   constants $C$ and $d_V$ so that
$\Vol (B_R(p)) \leq C \, (1+R)^{d_V}$ for some $p \in M$, all $R>0$.\footnote{A volume doubling space with doubling constant $C_D$ has polynomial volume growth of degree ${\log_2 C_D}$.}
Our main result is the following sharp inequality:

\begin{Thm}   \label{t:main2}
If $M$ has polynomial volume growth and $k$ is a nonnegative integer, then
\begin{align}	\label{e:0p4}
\dim \cP_{2k }(M)\leq \sum_{i=0}^k\,\dim \cH_{2i }(M)  \, .
\end{align}
\end{Thm}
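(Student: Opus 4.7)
The plan is to induct on $k$ using the linear map
\[
\Delta : \cP_{2k}(M) \to \cP_{2k-2}(M), \quad u \mapsto \Delta u,
\]
with the convention that $\cP_d = \{0\}$ when $d < 0$. Assuming for the moment that this map is well-defined, I would proceed as follows. If $u \in \cP_{2k}$ lies in $\ker \Delta$, then the heat equation gives $\partial_t u = \Delta u = 0$, so $u(x,t) = h(x)$ is time-independent; the polynomial growth bound on $u$ then forces $h \in \cH_{2k}(M)$. Conversely any $h \in \cH_{2k}$ defines an element of $\ker \Delta$, so $\ker \Delta = \cH_{2k}(M)$. Rank-nullity yields
\[
\dim \cP_{2k}(M) \leq \dim \cH_{2k}(M) + \dim \cP_{2k-2}(M),
\]
and iterating on $k$ gives the desired $\sum_{i=0}^k \dim \cH_{2i}(M)$. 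The base case reduces to the same key estimate at $k=0$: if $v \in \cP_{-2}$, i.e., $\sup_{B_R(p)\times[-R^2,0]}|v| \leq C(1+R)^{-2}$, then for any fixed $(x,t)$ we may take $R \to \infty$ to conclude $v(x,t) = 0$. Hence $\cP_{-2} = \{0\}$ and thus $\cP_0 = \cH_0$.

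The main obstacle is establishing that $\Delta$ actually does map $\cP_{2k}$ to $\cP_{2k-2}$, i.e., that applying a Laplacian (equivalently, a time derivative) lowers the parabolic growth degree by two. Since $\Delta u = \partial_t u$, the task is to promote a pointwise bound $\sup_{B_R \times [-R^2,0]} |u| \leq C(1+R)^{2k}$ into a corresponding bound $\sup_{B_{R/2} \times [-R^2/4,0]}|\partial_t u| \leq C'(1+R)^{2k-2}$. On $\RR^n$, or under sufficient local regularity, this follows at once from parabolic Schauder estimates and parabolic scaling; under only polynomial volume growth those tools are not directly available. The natural substitute is the integrated form
\[
u(\cdot,t+h) - u(\cdot,t) = \int_t^{t+h} \Delta u \, d\tau,
\]
combined with averaging over $h$ on a parabolic scale and bootstrapping from an $L^2$ control on parabolic cylinders (which is inherited from the $L^\infty$ bound and the volume growth) to the pointwise estimate. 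The heart of the argument, and what makes the polynomial-volume-growth hypothesis sufficient, lies precisely in carrying out this Laplacian-reduction step using only the weakest available parabolic regularity; everything else is formal bookkeeping via the induction described above.
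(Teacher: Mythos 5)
You correctly identify the key obstruction yourself: the claim that $\Delta$ (equivalently $\partial_t$) maps $\cP_{2k}(M)$ into $\cP_{2k-2}(M)$. This is precisely where your argument has a genuine gap, and the remedy you sketch (passing from an $L^2$ bound on $\partial_t u$ over parabolic cylinders to a pointwise bound) cannot work under the theorem's hypotheses. Going from $L^2$ to $L^\infty$ is the content of a mean value inequality, which is not implied by polynomial volume growth alone; the entire point of the theorem is that it holds without any such local parabolic regularity. If you were willing to assume a mean value inequality (as under $\Ric \geq 0$ or volume doubling plus Poincar\'e), then your induction via $\Delta : \cP_{2k} \to \cP_{2k-2}$ with $\ker \Delta = \cH_{2k}(M)$ would be correct and would give the right sum, but it would prove a strictly weaker theorem.

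The paper avoids ever needing pointwise control on $\partial_t u$. The ideas your proposal is missing: (i) a reverse Poincar\'e inequality, $r^4 \int_{B_{r/10} \times [-r^2/100,0]} u_t^2 \leq c \int_{B_r \times [-r^2,0]} u^2$; (ii) iterating it together with polynomial volume growth forces $\partial_t^m u \equiv 0$ for some finite $m$, a purely $L^2$ rigidity statement requiring no local regularity; (iii) hence $u = p_0 + t\,p_1 + \dots + t^k p_k$ is a polynomial in $t$, and each coefficient is recovered \emph{exactly} as a finite linear combination $p_j(x) = \sum_i b^j_i\, u(x, R^2 t_i)$ via a Vandermonde/rescaling argument, which yields the sharp pointwise bound $|p_j(x)| \leq C_j(1 + |x|^{2(k-j)})$ with no regularity theory whatsoever; (iv) the filtration is then by $u \mapsto p_k \in \cH_0$, then $u \mapsto p_{k-1} \in \cH_2$ on the kernel, and so on. Note your filtration is ``dual'' to the paper's (your kernel at the top stage is $\cH_{2k}$ and you descend; the paper's is $\cH_0$ and it ascends), and both produce the sum $\sum_{i=0}^k \dim \cH_{2i}(M)$. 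A posteriori --- once the decomposition and the bounds on the $p_j$ are in hand --- one indeed sees $\partial_t u \in \cP_{2k-2}(M)$, so your scheme can be made rigorous, but only after essentially all of the paper's machinery has already been built.
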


The inequality \eqr{e:0p4}   is an equality on $\RR^n$ (see Corollary \ref{c:c218} below).
Since $\cH_{d_1}\subset \cH_{d_2}$ for $d_1\leq d_2$,  Theorem \ref{t:main2} implies:

\begin{Cor}	\label{t:CMPd1}
If $M$ has polynomial volume growth,  then for all $k \geq 1$ 
\begin{align}
	\dim \cP_{2k}(M) \leq (k+1) \, \dim \cH_{2k}(M) \, .
\end{align}
\end{Cor}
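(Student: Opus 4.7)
The plan is to deduce the Corollary directly from Theorem \ref{t:main2} by bounding every summand on the right-hand side of \eqref{e:0p4} by the largest one, so essentially no new analysis is required.

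First, I would record the elementary monotonicity $\cH_{d_1}(M)\subset\cH_{d_2}(M)$ whenever $0\leq d_1\leq d_2$. This is immediate from the defining growth bound: if $u\in\cH_{d_1}(M)$ and $R\geq 0$, then
\[ \sup_{B_R(p)}|u|\leq C_u\,(1+R)^{d_1}\leq C_u\,(1+R)^{d_2}, \]
since $1+R\geq 1$ and $d_1\leq d_2$. Therefore $\dim\cH_{2i}(M)\leq\dim\cH_{2k}(M)$ for every $0\leq i\leq k$.

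Second, I would apply Theorem \ref{t:main2} and replace each of the $k+1$ terms in its right-hand side by the single largest one:
\[ \dim\cP_{2k}(M)\leq\sum_{i=0}^{k}\dim\cH_{2i}(M)\leq\sum_{i=0}^{k}\dim\cH_{2k}(M)=(k+1)\,\dim\cH_{2k}(M), \]
which is precisely the claimed inequality.

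There is no substantive obstacle: the whole content of the corollary is already contained in Theorem \ref{t:main2}, and the deduction rests only on the trivial inclusion of polynomial-growth harmonic function spaces. Any difficulty lives entirely in the proof of the main theorem itself.
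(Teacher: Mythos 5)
Your deduction is correct, and it is in fact exactly the route the paper indicates in the sentence preceding the corollary's statement, where it observes that $\cH_{d_1}\subset\cH_{d_2}$ for $d_1\leq d_2$ and says Theorem \ref{t:main2} implies the corollary. However, the paper also supplies a separate, self-contained proof inside an explicit proof environment, presented \emph{before} Theorem \ref{t:main2} is established, and that is the argument worth comparing against. The direct proof does not invoke Theorem \ref{t:main2} at all; instead it uses the reverse Poincar\'e inequality (Lemma \ref{l:RPh}) and the vanishing of high-order time-derivatives (Corollary \ref{c:tpolynomial}) to show that any $u\in\cP_{2k}(M)$ is a polynomial in $t$ of degree at most $k$, namely $u=p_0+t\,p_1+\dots+t^k\,p_k$ with each $p_j$ of polynomial growth at most $2k$ and $\Delta p_j=(j+1)p_{j+1}$, and then builds a chain of linear maps $\Psi_j$ sending $u$ to $p_{k-j}\in\cH_{2k}(M)$, bounding $\dim\cP_{2k}$ by summing rank-plus-nullity estimates. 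The paper does it this way because the proof of Theorem \ref{t:main2} is obtained by refining exactly that argument: Lemma \ref{l:strati} sharpens the growth of $p_j$ to $R^{2(k-j)}$, so that $\Psi_j$ lands in the smaller space $\cH_{2j}(M)$, giving the sharper sum $\sum_{i=0}^{k}\dim\cH_{2i}(M)$. Your two-line deduction is valid and is the cleanest way to express the logical relationship, but it treats Theorem \ref{t:main2} as a black box; the paper's self-contained proof is where the actual analytic content lives, and it is placed before Theorem \ref{t:main2} precisely so the reader sees the decomposition $u=\sum t^j p_j$ and the kernel argument in their simplest form before the refinement.
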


 
Combining this
with the bound $   \dim \cH_d (M) \leq C \, d^{n-1}$ when  $\Ric_{M^n} \geq 0$  from \cite{CM3} gives:
 
\begin{Cor}	\label{t:CMPd}
There exists $C=C(n)$ so that if   $\Ric_{M^n} \geq 0$, then for $d\geq 1$
\begin{align}	\label{e:cmpd}
	\dim \, \cP_d (M) \leq C \, d^n \, .
\end{align}
\end{Cor}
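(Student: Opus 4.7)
The plan is to combine Theorem \ref{t:main2} with the dimension bound $\dim \cH_d(M) \leq C d^{n-1}$ from \cite{CM3}. Two preliminary observations do all the work: first, the Bishop--Gromov volume comparison implies that $\Ric_{M^n} \geq 0$ gives $\Vol(B_R(p)) \leq C_n R^n$, so $M$ has polynomial volume growth (with $d_V = n$) and Theorem \ref{t:main2} applies; second, $\cP_{d_1}(M) \subseteq \cP_{d_2}(M)$ whenever $d_1 \leq d_2$, so it suffices to prove the bound for $d = 2k$ with $k$ a positive integer.

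The core computation is then to sum the harmonic bounds coming out of Theorem \ref{t:main2}. For each nonnegative integer $k$, that theorem yields
\begin{align}
\dim \cP_{2k}(M) \leq \sum_{i=0}^{k} \dim \cH_{2i}(M) \leq 1 + C \sum_{i=1}^{k} (2i)^{n-1},
\end{align}
using $\dim \cH_{2i}(M) \leq C (2i)^{n-1}$ from \cite{CM3} and $\dim \cH_0(M) = 1$. The elementary estimate $\sum_{i=1}^{k} i^{n-1} \leq k^n$ then gives $\dim \cP_{2k}(M) \leq C'(n)\, k^n$.

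To finish, for arbitrary $d \geq 1$ I would choose the positive integer $k$ with $2(k-1) < d \leq 2k$, so that $k \leq d/2 + 1 \leq 3d/2$. Since $\cP_d(M) \subseteq \cP_{2k}(M)$, the previous step yields
\begin{align}
\dim \cP_d(M) \leq C'(n)\, k^n \leq C(n)\, d^n,
\end{align}
which is the claimed bound \eqref{e:cmpd}.

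There is no real obstacle here: the statement is a direct book-keeping consequence of the two input estimates. The only point requiring a moment of care is ensuring the hypothesis of Theorem \ref{t:main2} is met, which is immediate from Bishop--Gromov under $\Ric \geq 0$, and that the summation of $(2i)^{n-1}$ does yield the correct exponent $n$, accounting for the factor of $k$ extra terms in the sum.
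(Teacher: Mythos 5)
Your proof is correct and follows essentially the same route as the paper: the paper passes through Corollary \ref{t:CMPd1} (the bound $\dim\cP_{2k}\leq (k+1)\dim\cH_{2k}$, itself an immediate consequence of Theorem \ref{t:main2}), while you sum the terms of Theorem \ref{t:main2} directly; both reduce to the same book-keeping with the CM3 bound $\dim\cH_d\leq Cd^{n-1}$, Bishop--Gromov for the volume-growth hypothesis, and monotonicity $\cP_{d_1}\subseteq\cP_{d_2}$ to handle non-integer $d$.
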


The exponent $n$ in \eqr{e:cmpd}   is  sharp:  There is a constant $c$ depending on $n$ so that for $d\geq 1$
\begin{align}	\label{e:cpdrn}
	c^{-1} \, d^n \leq \dim \cP_d (\RR^n) \leq c \, d^n \, .
\end{align}
Recently, Lin and Zhang, 
\cite{LZ},  proved   very interesting  related results,  adapting the methods of \cite{CM2}--\cite{CM4} to get the  bound $d^{n+1}$.

Using parabolic gradient estimates of Li-Yau, \cite{LiY}, and Souplet-Zhang,  \cite{SoZ}, one can show that
  if $d < 2$ and $\Ric \geq 0$, then $\cP_d(M) = \cH_d(M)$ consists only of harmonic functions of polynomial growth.  In particular,  $\cP_d (M) = \{ {\text{Constant functions}} \}$ for  $d<1$ and,
 moreover, 
$\dim \cP_1 (M) \leq n+1$, by Li and Tam, \cite{LiTa}, with equality if and only if $M = \RR^n$ by \cite{ChCM}.   

The exponent 
  $n-1$ is also sharp in the bound for $\dim \cH_d$ when $\Ric_{M^n} \geq 0$.  However, as in Weyl's
asymptotic formula, the coefficient of $d^{n-1}$ can be
related to the volume, \cite{CM3}:
\begin{equation}    \label{e:cm15a}
    \dim \cH_d (M)\leq C_n \, \V_M \, d^{n-1} + o (d^{n-1}) \,  .
\end{equation}
\begin{itemize}  
 \item
  $\V_M$ is the ``asymptotic volume ratio'' $\lim_{r\to \infty} \,
  \Vol (B_r)/ r^n$. \item $o(d^{n-1})$ is a function of $d$ with
  $\lim_{d\to \infty} \, o(d^{n-1})/d^{n-1} = 0$.
  \end{itemize}
Combining \eqr{e:cm15a} with Corollary \ref{t:CMPd1} gives $\dim \, \cP_d (M) \leq C_n \, \V_M \, d^{n} + o (d^{n})$ when $\Ric_{M^n} \geq 0$.

An interesting  feature of these dimension estimates is that they
follow from ``rough'' properties of $M$ and are therefore
surprisingly stable under perturbation. For instance, 
\cite{CM4}  proves finite dimensionality of $\cH_d$  for manifolds
with a volume doubling and a Poincar\'e inequality, so we also get finite dimensionality for $\cP_d$ on these spaces. 
Unlike
a Ricci curvature bound, these properties are stable under
bi--Lipschitz transformations (cf. \cite{MS}). Moreover, these properties  make sense also for discrete spaces, vastly extending the theory and methods out of the continuous world.  
Recently Kleiner, \cite{K}, (see also Shalom-Tao, \cite{ST}, \cite{T1}, \cite{T2}) used, in part, this in his new proof of an important and foundational result in geometric group theory, originally due to Gromov, \cite{G}.    We expect that the proof of Theorem \ref{t:main2}  extends to many discrete spaces, allowing a wide range of applications.

\section{Ancient solutions of the heat equation}

The next lemma gives a reverse Poincar\'e inequality for the heat equation (cf. \cite{M}).
 
\begin{Lem}	\label{l:RPh}
There is a universal constant $c$ so that if $u_t=\Delta\,u$, then 
\begin{align}
	   r^2 \, \int_{ B_{ \frac{r}{10} } \times [ - \frac{r^2}{100} , 0]   }|\nabla u|^2 + r^4 \, \int_{ B_{ \frac{r}{10} } \times [ - \frac{r^2}{100} , 0]   } u_t^2 &\leq c \,  \int_{B_r \times [-r^2 , 0]} u^2   \, .
\end{align}
 \end{Lem}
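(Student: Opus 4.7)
\medskip

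\noindent\textbf{Proof sketch.} The plan is a two--step Caccioppoli/reverse--Poincar\'e argument with cutoffs in space and time. Fix a spatial cutoff $\phi$ and a time cutoff $\eta$, to be chosen with $0\le\phi,\eta\le 1$, $\phi\equiv 1$ on $B_{r/5}$ and supported in $B_r$, $\eta\equiv 1$ on $[-r^2/50,0]$ and supported in $[-r^2,0]$, with $|\nabla\phi|\le c/r$ and $|\eta_t|\le c/r^2$. I will first bound $\int |\nabla u|^2$ by $\int u^2$, and then use this to bound $\int u_t^2$.

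\medskip

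\noindent\textbf{Step 1 (gradient bound).} Multiply $u_t=\Delta u$ by $u\,\phi^2\eta^2$ and integrate over $B_r\times[-r^2,0]$. The right-hand side gives, after integration by parts in space,
\begin{align*}
\int u\,\Delta u\,\phi^2\eta^2 \,=\, -\int |\nabla u|^2\phi^2\eta^2 \,-\, 2\int u\,\phi\,\eta^2\,\nabla u\cdot\nabla\phi \, .
\end{align*}
The left side is $\frac{1}{2}\int (u^2)_t\,\phi^2\eta^2 = -\int u^2\,\phi^2\,\eta\,\eta_t$ after integrating by parts in time (no boundary terms since $\eta$ vanishes at $t=-r^2$ and $\eta_t\equiv 0$ near $t=0$ by choosing $\eta$ so). Absorbing the cross term by Cauchy--Schwarz gives
\begin{align*}
\int |\nabla u|^2\phi^2\eta^2 \,\le\, c\int u^2\bigl(|\nabla\phi|^2\eta^2+\phi^2|\eta\,\eta_t|\bigr)\,\le\,\frac{c}{r^2}\int_{B_r\times[-r^2,0]} u^2 \, .
\end{align*}

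\medskip

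\noindent\textbf{Step 2 (time derivative bound).} Now choose a second pair of cutoffs $\tilde\phi,\tilde\eta$ with $\tilde\phi\equiv 1$ on $B_{r/10}$, supported in $B_{r/5}$, $\tilde\eta\equiv 1$ on $[-r^2/100,0]$, supported in $[-r^2/50,0]$. Multiply $u_t=\Delta u$ by $u_t\,\tilde\phi^2\tilde\eta^2$ and integrate by parts in space:
\begin{align*}
\int u_t^2\,\tilde\phi^2\tilde\eta^2 \,=\, -\int \nabla u\cdot\nabla u_t\,\tilde\phi^2\tilde\eta^2 \,-\, 2\int u_t\,\tilde\phi\,\tilde\eta^2\,\nabla u\cdot\nabla\tilde\phi \, .
\end{align*}
The first term on the right equals $-\frac{1}{2}\int(|\nabla u|^2)_t\,\tilde\phi^2\tilde\eta^2=\int |\nabla u|^2\,\tilde\phi^2\,\tilde\eta\,\tilde\eta_t$, and a Cauchy--Schwarz absorption on the cross term yields
\begin{align*}
\int u_t^2\,\tilde\phi^2\tilde\eta^2 \,\le\, c\int |\nabla u|^2\bigl(\tilde\phi^2|\tilde\eta\,\tilde\eta_t|+|\nabla\tilde\phi|^2\tilde\eta^2\bigr)\,\le\,\frac{c}{r^2}\int_{B_{r/5}\times[-r^2/50,0]}|\nabla u|^2 \, .
\end{align*}
Plugging in the bound from Step 1 (applied on $B_r\times[-r^2,0]$, which contains the larger support) gives $\int_{B_{r/10}\times[-r^2/100,0]} u_t^2 \le \tfrac{c}{r^4}\int_{B_r\times[-r^2,0]} u^2$, completing the lemma.

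\medskip

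\noindent\textbf{Main obstacle.} The argument is essentially routine once one sees that a two--scale nesting is needed for the $u_t^2$ estimate: directly testing against $u$ only gives $|\nabla u|^2$, so one must spend one integration on each level of differentiation. The only mild care required is that the time integrations by parts produce no boundary terms, which is arranged by taking $\eta,\tilde\eta$ to vanish at the left endpoint and to be constant near $t=0$ (or equivalently using that $u,u_t$ are smooth up to $t=0$).
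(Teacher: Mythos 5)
Your proof is correct and gives the stated estimate, but your route differs from the paper's in one organizational respect. You introduce a time cutoff $\eta$ (vanishing at $t=-r^2$) in addition to the spatial cutoff, so the temporal boundary terms are handled automatically. The paper instead uses only a spatial cutoff, integrates $\partial_t\int u^2\psi^2$ and $\partial_t\int|\nabla u|^2\psi^2$ over a time interval, and then uses a pigeonhole/mean--value argument to pick a good starting time $r_1\in[4r/5,r]$ (and later a good $\rho\in[r/5,2r/5]$) at which the slice integral $\int_{\{t=-r_1^2\}}u^2$ (resp.\ $\int_{\{t=-\rho^2\}}|\nabla u|^2$) is controlled by the space--time average. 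These are two standard ways to suppress the lower boundary term; your time--cutoff version is arguably the cleaner Caccioppoli form, while the paper's slice--selection avoids having to track $\eta_t$ and keeps all integrals over full product cylinders.

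One small inaccuracy in your write--up: the identity $\tfrac12\int(u^2)_t\,\phi^2\eta^2=-\int u^2\phi^2\eta\,\eta_t$ is not quite right, since $\eta(0)=1$ produces a boundary contribution $\tfrac12\int_{\{t=0\}}u^2\phi^2$ that does not vanish (having $\eta_t\equiv 0$ near $t=0$ kills the interior term near $t=0$, not the boundary term). Fortunately this term has the helpful sign -- it appears with a minus sign once you solve for $\int|\nabla u|^2\phi^2\eta^2$ -- so it can simply be dropped, and the same remark applies in Step 2 where $\tfrac12\int_{\{t=0\}}|\nabla u|^2\tilde\phi^2\ge 0$. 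Worth stating the sign explicitly rather than asserting the boundary terms are absent, but the conclusion is unaffected.
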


\begin{proof}
Let $Q_R$ denote $B_R \times [-R^2,0]$ and $\psi$ be a cutoff function on $M$.  Since $u_t = \Delta\, u$, integration by parts and the absorbing inequality $4ab \leq a^2 + 4b^2$ give
\begin{align}
	\partial_t \, \int u^2 \, \psi^2 &= 2 \, \int u\, \psi^2 \, \Delta u = - 2 \int |\nabla u|^2 \, \psi^2 - 4 \int u\, \psi\, \langle \nabla \psi , \nabla u \rangle \notag \\
		&\leq -   \int |\nabla u|^2 \, \psi^2  + 4 \int u^2 \, |\nabla \psi |^2 \, .
\end{align}
Integrating this  in time from $-R^2$ to $0$ gives
\begin{align}
	\int_{t=0} u^2\, \psi^2 - \int_{t=-R^2} u^2\, \psi^2 & \leq  \int_{-R^2}^0 \, \left(  -   \int |\nabla u|^2 \, \psi^2  + 4 \int u^2 \, |\nabla \psi |^2 \right) \, dt  \, .
\end{align}
In particular, we get 
\begin{align}
	\int_{-R^2}^0 \,    \int |\nabla u|^2 \, \psi^2 \, dt  \leq \int_{t=-R^2} u^2\, \psi^2 + 4 \,  \int_{-R^2}^0 \,   \int u^2 \, |\nabla \psi |^2  \, dt   \, .
\end{align}
Let $|\psi | \leq 1$   be one on $B_{R/2}$,  have support in $B_R$, and satisfy   $|\nabla \psi | \leq 2/R$, so we get
\begin{align}	\label{e:R1}
	\int_{Q_{ R/2}} |\nabla u|^2 \leq \int_{B_R \times \{ t=-R^2 \} } \, u^2 + \frac{16}{R^2} \, \int_{Q_R} u^2 \, .
\end{align}
Next, we argue similarly to get a bound on $u_t^2$.  Namely, differentiating, then integrating by parts and using that $u_t = \Delta\, u$ gives
\begin{align}	\label{e:e1p7}
	\partial_t \, \int |\nabla u|^2\, \psi^2 &= 2 \int \langle \nabla u , \nabla u_t \rangle\, \psi^2 = - 2 \int   u_t^2\, \psi^2 - 4 \int u_t\, \psi \, \langle \nabla u , \nabla \psi \rangle \notag \\
	&\leq  -   \int   u_t^2\, \psi^2 + 4 \int  | \nabla u|^2\, | \nabla \psi|^2 \, .
\end{align}
Integrating \eqr{e:e1p7} in time from $-R^2$ to $0$ gives
\begin{align}
	\int_{t=0}  |\nabla u|^2\, \psi^2 - \int_{t=-R^2}  |\nabla u|^2\, \psi^2 \leq \int_{-R^2}^0 \left(  -   \int   u_t^2\, \psi^2 + 4 \int  | \nabla u|^2\, | \nabla \psi|^2 \right) \, dt \, .
\end{align}
Letting $\psi$ be as above, we conclude that
\begin{align}	\label{e:R2}
	\int_{ Q_{ R/2} } u_t^2 \leq \frac{16}{R^2} \int_{Q_R} |\nabla u|^2 +  \int_{B_R \times \{ t=-R^2 \} }   |\nabla u|^2 \, .
\end{align}

Next,   choose some $r_1 \in [4r/5 , r]$ with 
\begin{align}	\label{e:atr1}
	\int_{ B_r \times \{ t = - r_1^2 \} } \, u^2 \leq \frac{25}{9\,r^2} \, \int_{-r^2}^0  \, \left( \int_{B_r} u^2 \right)  \, dt=\frac{25}{9\,r^2} \, \int_{Q_r} u^2 \, .
\end{align}
Applying \eqr{e:R1} with $R = r_1$ and using the bound \eqr{e:atr1} at $r_1$ gives
\begin{align}	\label{e:R1a}
	\int_{Q_{ \frac{2\,r}{5}}   } |\nabla u|^2 \leq   \int_{Q_{ \frac{r_1}{2}}   } |\nabla u|^2 \leq    \int_{B_{r_1} \times \{ t=-r_1^2 \} } \, u^2 + \frac{16}{r_1^2} \, \int_{Q_{r_1}} u^2  \leq \frac{20}{r_1^2} \int_{Q_r} u^2     \, .
\end{align}
For simplicity, $c$ is a constant independent of everything that can change from line to line.
It follows from \eqr{e:R1a} that there must exist some $\rho   \in [r/5 , 2r/5]$ so that 
\begin{align}	\label{e:R1b}
	\int_{B_{\frac{2\,r}{5}} \times \{ t = -\rho^2 \} } \, |\nabla u|^2 \leq \frac{25}{3r^2} \, \int_{-\frac{4\,r^2}{25}}^0  \left( \int_{B_{ \frac{2\,r}{5}}   } |\nabla u|^2 \right) \, dt
	=\frac{25}{3r^2} \, \int_{Q_{ \frac{2\,r}{5}}   } |\nabla u|^2 \leq \frac{c}{r^4} \, 
	 \int_{Q_r} u^2    \, .
\end{align}
Now applying \eqr{e:R2} with $R = \rho$ and using \eqr{e:R1a} and \eqr{e:R1b} gives 
\begin{align}
	  \int_{ Q_{ \rho/2} } u_t^2 &\leq \frac{16}{\rho^2} \int_{Q_{\rho}} |\nabla u|^2 +  \int_{B_{\rho} \times \{ t=-\rho^2 \} }   |\nabla u|^2  \leq  \frac{c}{r^4} \, 
	 \int_{Q_r} u^2   \, .
\end{align}
\end{proof}

\begin{Cor}	\label{c:tpolynomial}
If $\Vol (B_R) \leq C \, (1+R)^{d_V}$ and $u \in \cP_d(M)$, then $\partial_t^k u \equiv 0$ for $4k > 2d+d_V+2$.
\end{Cor}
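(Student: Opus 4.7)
The plan is to apply Lemma \ref{l:RPh} iteratively to successive time derivatives of $u$. Since $u$ is caloric and smooth (by parabolic regularity), every $v = \partial_t^j u$ also satisfies the heat equation. Hence Lemma \ref{l:RPh} applied to $v = \partial_t^{j} u$ at scale $\rho$ yields
\begin{align}
\int_{Q_{\rho/10}} \bigl(\partial_t^{j+1} u\bigr)^2 \leq \frac{c}{\rho^4} \int_{Q_\rho} \bigl(\partial_t^{j} u\bigr)^2 \, .
\end{align}
Composing this $k$ times, starting at $\rho_0 = R$ with $\rho_{j+1} = \rho_j/10$, and multiplying out the factors $\rho_j^{-4}$, produces a bound of the form
\begin{align}
\int_{Q_{R/10^k}} \bigl(\partial_t^{k} u\bigr)^2 \leq \frac{C_k}{R^{4k}} \int_{Q_R} u^2 \, ,
\end{align}
where $C_k$ is a constant depending only on $k$ (through the absolute constant $c$ in the lemma and the geometric sum of the $10^{4j}$ factors).

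The next step is to exploit the two growth hypotheses. Since $u \in \cP_d(M)$, we have $|u| \leq C_u (1+R)^d$ on $B_R \times [-R^2,0]$, while the polynomial volume growth gives $\Vol(B_R) \leq C(1+R)^{d_V}$. Therefore
\begin{align}
\int_{Q_R} u^2 \leq C_u^2\,(1+R)^{2d}\cdot R^2\cdot \Vol(B_R) \leq C' \, (1+R)^{2d+d_V+2} \, ,
\end{align}
so combining with the iterated estimate gives
\begin{align}
\int_{Q_{R/10^k}} \bigl(\partial_t^{k} u\bigr)^2 \leq C_k'\, R^{-4k} \, (1+R)^{2d+d_V+2} \, .
\end{align}

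To conclude, fix any compact set $K \subset M$ and $T>0$. Once $R$ is taken large enough, $K \times [-T,0] \subset Q_{R/10^k}$, so the left-hand side bounds $\int_{K\times[-T,0]} (\partial_t^k u)^2$ by the same right-hand side. Under the hypothesis $4k > 2d+d_V+2$, the exponent $2d+d_V+2-4k$ is negative, so letting $R\to\infty$ forces $\int_{K\times[-T,0]} (\partial_t^k u)^2 = 0$. Since $K$ and $T$ are arbitrary, $\partial_t^k u \equiv 0$ on $M\times(-\infty,0]$. I do not anticipate a real obstacle; the main points to handle carefully are tracking how the $\rho^{-4}$ factors accumulate under the iteration and making sure that the final power of $(1+R)$ is indeed $2d+d_V+2$, coming from $(1+R)^{2d}$ times the volume factor and the time interval of length $R^2$.
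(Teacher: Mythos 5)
Your proof is correct and follows essentially the same path as the paper's: iterate the reverse Poincar\'e inequality of Lemma \ref{l:RPh} down the chain $u, u_t, u_{tt}, \dots$, bound $\int_{Q_R} u^2$ by $R^2 \Vol(B_R) \sup_{Q_R} u^2 \lesssim (1+R)^{2d+d_V+2}$, and let $R\to\infty$. (Minor slip: the accumulated factor from the $\rho_j^{-4}$'s is a product, not a ``geometric sum,'' but it is still a constant depending only on $k$, so nothing changes.)
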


\begin{proof}
Since the metric on $M$ is constant in time, $\partial_t - \Delta$ commutes with $\partial_t$ and, thus, $(\partial_t - \Delta) \partial^j_t u=0$  for every $j$.  
Let $Q_R$ denote $B_R \times [-R^2,0]$.   Applying 
Lemma \ref{l:RPh} to $u$ on $Q_r$ for some $r$, then to $u_t$ on $Q_{ \frac{r}{10} }$, etc., we get a constant $c_k$ depending just on $k$ so that
\begin{align}
	\int_{ Q_{ \frac{r}{10^k} }} \left| \partial^k_t u \right|^2 \leq \frac{c_k}{ r^{4k} } \, \int_{ Q_r } u^2  
	\leq \frac{c_k}{ r^{4k} } \, r^2 \, \Vol (B_r) \, \sup_{Q_r} u^2  \leq C\, c_k \, r^{2-4k} \, (1+r)^{2d+d_V}  \, .
\end{align}
 Since  $4k > 2d+d_V+2$, the right-hand side goes to zero as $r \to \infty$, giving the corollary.
\end{proof}

We will prove Corollary \ref{t:CMPd1} next, though it will eventually  be a corollary of Theorem \ref{t:main2}.

\begin{proof}[Proof of Corollary \ref{t:CMPd1}]
Choose an integer $m$ with $4m > 2k+d_V+2$.   
Corollary \ref{c:tpolynomial} gives that $\partial_t^m u=0$ for any $u \in \cP_{2k}(M)$.
  Thus, any $u \in \cP_{2k} (M)$  can be written as
\begin{align}	\label{e:decompou}
	u = p_0 + t \, p_1 + \dots + t^{m-1} \, p_{m-1} \, ,
\end{align}
where each $p_j$ is a function on $M$.    Moreover, using the growth bound 
$u \in \cP_{2k}(M)$ for $t$ large and $x$ fixed, we see that $p_j \equiv 0$ for any $j> k$.   (See   theorem $1.2$ in \cite{LZ} for a similar decomposition under more restrictive hypotheses and \cite{KoT} for a splitting result for ancient positive solutions on homogeneous spaces.) 

We will show next that the functions $p_j$ grow at most polynomially of degree $d$.   Fix distinct values $-1<t_1 < t_2 < \dots < t_k < t_{k+1}=0$.  We claim that the $k+1$-vectors
\begin{align}
	(1, t_i , t_i^2 , \dots , t_i^k)
\end{align}
 are linearly independent in $\RR^{k+1}$ for $i=1 , \dots , k+1$.  If this was not the case, then there would be some (non-trivial) $(a_0 , \dots , a_k) \in \RR^{k+1}$ that is orthogonal to all of them.
But this means that there would be $k+1$ distinct roots to the degree $k$ polynomial 
\begin{align}
	a_0 + a_1 t + \dots + a_k t^k \, ,
\end{align}
 which is impossible, and the claim follows.  Let $e_j \in \RR^{k+1}$ be the standard unit vectors. 
Since the 
$(1, t_i , t_i^2 , \dots , t_i^k)$'s span $\RR^{k+1}$, we can choose coefficients $b^j_i$ so that for each $j$
\begin{align}	\label{bijej}
	e_j  = \sum_i  b^j_i \, (1, t_i , t_i^2 , \dots , t_i^k) \, .
\end{align}
It follows from \eqr{e:decompou} and \eqr{bijej}  that
\begin{align}	\label{e:ply2k}
	p_j(x) = \sum_i b^j_i u(x , t_i) \, .
\end{align}
Since $u \in \cP_{2k} (M)$, \eqr{e:ply2k} implies that each $p_j$ is a linear combination of functions that grow polynomially of degree at most $2k$ and, thus,  $p_j$ grows polynomially of degree at most $2k$.

Since $u$ satisfies the heat equation, it follows that $\Delta p_{k} = 0$ and
\begin{align}	\label{e:deco1}
	\Delta p_j = (j+1) \, p_{j+1} \, .
\end{align}
Thus, we get a linear map $\Psi_0 : \cP_{2k} (M) \to \cH_{2k} (M)$ given by $\Psi_0 (u) = p_k$.  Let $\cK_0 = \Ker (\Psi_0)$.  It follows from this   that
\begin{align}	\label{e:cK01}
	\dim \cP_{2k}(M) \leq \dim \cK_0 + \dim \cH_{2k}(M)  \, .
\end{align}
If $u \in \cK_0$, then $p_k =0$ and  $\Delta p_{k-1} = 0$, so  we get a linear map  $\Psi_1 : \cK_0 \to \cH_{2k} (M)$ given by $\Psi_1 (u) = p_{k-1}$. Let $\cK_1$ be the kernel of $\Psi_1$ on $\cK_0$.  It follows as above that
\begin{align}	\label{e:cK11}
	  \dim  \cK_0  \leq \dim \cK_1  +   \dim \cH_{2k}(M) \, .
\end{align}
Repeating this $k+1$ times gives the theorem.
\end{proof}

\begin{Lem}	\label{l:strati}
If $u \in \cP_{2k}(M)$ can be written as $u=p_0(x) + t \, p_1(x) + \dots + t^k \, p_k(x)$, then  
\begin{align}		\label{e:strati}
	|p_j(x)| \leq C_j \, \left( 1 + |x|^{2(k-j)} \right) \, .
\end{align}
\end{Lem}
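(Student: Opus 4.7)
The approach is to adapt the Vandermonde-style interpolation argument from the preceding proof of Corollary \ref{t:CMPd1}, but with sample times that scale with the spatial distance rather than being fixed. The key observation is that evaluating $u$ at a \emph{fixed} grid $t_1 < \dots < t_{k+1}$ recovers only the crude bound $|p_j(x)| \leq C (1+|x|)^{2k}$ established there; rescaling the grid by $R^2$ where $R := 1 + d(x,p)$ extracts an extra factor of $R^{-2j}$ from the coefficient of $t^j$, giving the sharp degree $2(k-j)$.

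Concretely, fix $x \in M$, set $R := 1 + d(x,p)$, and choose $k+1$ distinct values $\tau_0, \dots, \tau_k \in [-1,0]$ once and for all. Let $t_i := \tau_i R^2 \in [-R^2, 0]$ and evaluate the decomposition $u = \sum_{j=0}^{k} t^j p_j(x)$ at each $(x, t_i)$ to get
\begin{align}
u(x, t_i) \;=\; \sum_{j=0}^{k} t_i^j \, p_j(x) \;=\; \sum_{j=0}^{k} \tau_i^j \, \bigl( R^{2j}\, p_j(x) \bigr).
\end{align}
The matrix $(\tau_i^j)_{0 \le i,j \le k}$ is Vandermonde with distinct nodes, hence invertible, and inverting it yields constants $b^j_i$ depending only on the choice of $\tau_i$'s (in particular, independent of $x$ and $R$) with
\begin{align}
R^{2j} \, p_j(x) \;=\; \sum_{i=0}^{k} b^j_i \, u(x, t_i).
\end{align}

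To conclude, note that by construction $x \in B_R(p)$ and each $t_i \in [-R^2, 0]$, so the defining growth bound $u \in \cP_{2k}(M)$ gives $|u(x, t_i)| \leq C_u (1+R)^{2k}$ for every $i$. Substituting into the previous display,
\begin{align}
|p_j(x)| \;\leq\; C_j \, R^{-2j} \, (1+R)^{2k} \;\leq\; C_j' \, (1 + |x|)^{2(k-j)},
\end{align}
which is the desired estimate (up to absorbing the trivial $R \geq 1$ factors).

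There is no serious obstacle in this argument: the only nontrivial point is the recognition that one must let the sampling times scale quadratically with $R$ in order to tease apart the different orders in $t$. Everything else is linear algebra (invertibility of the Vandermonde matrix) combined with the a priori parabolic growth hypothesis on $u$. Consequently, once the scaled interpolation is set up, the proof reduces to the estimate $|u(x,t_i)| \leq C_u (1+R)^{2k}$, which is immediate from $u \in \cP_{2k}(M)$.
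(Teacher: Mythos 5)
Your proof is correct and follows essentially the same route as the paper: evaluate the polynomial-in-$t$ decomposition at times that scale like $R^2$, invert the resulting Vandermonde system with $R$-independent coefficients, and then apply the defining $\cP_{2k}$ growth bound to pull out the factor $R^{-2j}$. The only cosmetic difference is that you fix $R = 1 + d(x,p)$ pointwise, while the paper takes $R>2$ and bounds $\sup_{B_R}|p_j|$; the content is identical.
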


\begin{proof}
By assumption, there is a constant $C$ so that
\begin{align}	\label{e:defcpd1}
	|u(x,t)| \leq C \, \left( 1 + |t|^k + |x|^{2k} \right) \, .
\end{align} 
Following the proof of Corollary \ref{t:CMPd1},  fix $-1<t_1 < t_2 < \dots < t_k < t_{k+1}= - \frac{1}{2}$ and coefficients $b^j_i$ so that  \eqr{bijej}  holds for each $j$.
 Observe that  \eqr{bijej}  gives for each   $j$ 
\begin{align}
	\sum_i b_i^ju(x,R^2\, t_i) =  \sum_i   \sum_{\ell}  b_i^j \, p_{\ell}(x) \, R^{2j} \, t_i^{\ell} =   \sum_{\ell} \sum_i   b_i^j \, p_{\ell}(x) \, R^{2j} \, t_i^{\ell}  =R^{2j} \, p_j(x)  \, .
\end{align}
Thus, given $R> 2$ and $x \in B_R$,  we get  that
\begin{align}	\label{e:ply2k}
	\left| R^{2j} \, p_j(x) \right| &= \left|  \sum_i b^j_i u(x , R^2\, t_i) \right| \leq \max_{i,j} \, |b^j_i| \, \sum_i |u(x,R^2\, t_i)| \notag \\
	&\leq \tilde{C} \, \left( 1 + |x|^{2k} + \max_i |R^2 \, t_i |^{ k} \right) \leq 3 \, \tilde{C} \, R^{2k} \, .
\end{align}
 From this, we conclude that
 	$\sup_{B_R} \, |p_j| \leq 3 \, \tilde{C} \, R^{2k-2j}$.    
\end{proof}

 \begin{proof}
 (of Theorem \ref{t:main2}).  Following the proof Corollary \ref{t:CMPd1}, each $u \in  \cP_{2k}(M)$,  can be expanded as
$
 	u=p_0(x) + t \, p_1(x) + \dots + t^k \, p_k(x)$.	   By Lemma \ref{l:strati},    the linear map $\Psi_0 : \cP_{2k} (M) \to \cH_{2k} (M)$ given by $\Psi_0 (u) = p_k$ actually maps into $\cH_0 (M)$ and, thus, 
\begin{align}	\label{e:cK01}
	\dim \cP_{2k}(M) \leq \dim \cH_{0}(M) + \dim \Ker (\Psi_0)    \, .
\end{align}
Similary,  Lemma \ref{l:strati} implies that the map $\Psi_1$ maps the kernel of $\Psi_0$  to $\cH_2(M)$.  Applying this repeatedly gives the theorem.
\end{proof}

\section{Caloric polynomials}

It is a classical fact that $ \cP_d(\RR^n)$ consists of caloric polynomials, i.e., polynomials in $x,t$ that satisfy the heat equation (\cite{E1}, \cite{E2}, \cite{N}).  We   compute the dimensions of these spaces.

Given  a polynomial in $x$ and $t$,   define its {\emph{parabolic degree}} by considering  $t$ to have degree two.  Thus, $x_1^{m_1} x_2^{m_2} t^{m_0}$ has parabolic degree $m_1 + m_2 + 2m_0$.  A polynomial in $x,t$ is homogeneous if each monomial has the same parabolic degree.  Let $A_p^n$ denote the 
homogeneous   degree 
$p$  polynomials   on $\RR^n$.  The parabolic homogeneous degree $p$ polynomials $\cA^n_p$ are
\begin{align}
	\cA^n_p = A_p^n \oplus t\, A_{p-2}^n \oplus t^2 \, A_{p-4}^n \oplus \dots 
\end{align}

\begin{Lem}	\label{l:gotit}
For each positive integer $p$, we have    $\dim 	\left( \cP_p (\RR^n) \cap  \cA^n_p \right) = \dim A^n_p$ and 
\begin{align}
	\dim \, \cP_p (\RR^n) = \sum_{j=0}^{p} \, \dim A^n_j \, .
\end{align}
\end{Lem}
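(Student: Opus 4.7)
The plan is to identify $\cP_p(\RR^n)$ with the kernel of $\partial_t-\Delta$ acting on polynomials in $(x,t)$ of parabolic degree at most $p$, and then exploit the parabolic-homogeneous grading. First, every $u\in\cP_p(\RR^n)$ is such a polynomial: by Corollary \ref{c:tpolynomial} there is an integer $m$ with $\partial_t^m u\equiv 0$, so $u=\sum_{j=0}^{m-1} t^j\, p_j(x)$; the argument in the proof of Corollary \ref{t:CMPd1} then shows each $p_j$ is a harmonic function on $\RR^n$ of polynomial growth, hence a polynomial (by the classical Liouville/mean-value theorem on $\RR^n$). Rescaling $u(Rx,R^2t)$ and using the growth bound $|u(x,t)|\leq C(1+R)^p$ on $\{|x|\leq R,\,-R^2\leq t\leq 0\}$, one sees that every monomial $x^\alpha t^j$ in $u$ satisfies $|\alpha|+2j\leq p$, so $u\in\bigoplus_{j=0}^p \cA^n_j$.

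Next I would use the parabolic grading. Both $\partial_t$ and $\Delta$ lower parabolic degree by exactly $2$, so $\partial_t-\Delta$ sends $\cA^n_j$ into $\cA^n_{j-2}$; writing $u=\sum_{j=0}^p u_j$ with $u_j\in\cA^n_j$, the heat equation decouples into $(\partial_t-\Delta)u_j=0$ for each $j$. Hence
\begin{align*}
\cP_p(\RR^n)=\bigoplus_{j=0}^p \big(\cP_p(\RR^n)\cap \cA^n_j\big),
\end{align*}
where each summand is just the caloric elements of $\cA^n_j$ (the growth bound is automatic since these are polynomials of parabolic degree $j\leq p$). To identify the $j$-th summand with $A^n_j$, given $q_0\in A^n_j$ I set
\begin{align*}
u(x,t)=\sum_{i\geq 0} \frac{t^i}{i!}\,\Delta^i q_0(x),
\end{align*}
a finite sum (terminating once $2i>j$) that lies in $\cA^n_j$ and satisfies $\partial_t u=\Delta u$ by direct computation. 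Conversely any caloric $u\in\cA^n_j$ has the form $\sum_i t^i q_i(x)$ with $q_i\in A^n_{j-2i}$, and $\partial_t u=\Delta u$ forces the recursion $(i+1)q_{i+1}=\Delta q_i$, so $u$ is uniquely determined by $q_0:=u|_{t=0}$. Thus $q_0\mapsto u$ is a linear isomorphism $A^n_j \to \cP_p(\RR^n)\cap \cA^n_j$, giving $\dim(\cP_p(\RR^n)\cap \cA^n_j)=\dim A^n_j$. Summing $j=0,\dots,p$ yields the second claim.

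The only subtle step is the first one, where one must confirm that every $u\in\cP_p(\RR^n)$ really is a polynomial of parabolic degree at most $p$; this combines Corollary \ref{c:tpolynomial} with the Liouville theorem on $\RR^n$. Once that is in hand, the parabolic grading reduces the problem to the elementary recursion $q_{i+1}=\frac{1}{i+1}\Delta q_i$, which has no obstruction because $\Delta$ sends $A^n_j$ into $A^n_{j-2}$.
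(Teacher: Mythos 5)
Your argument is correct and shares the paper's overall structure --- decompose by parabolic degree and count the caloric polynomials in each homogeneous piece $\cA^n_j$ --- but the key computation is implemented differently. The paper shows that $(\partial_t - \Delta): \cA^n_p \to \cA^n_{p-2}$ is surjective (via the explicit preimage $t\,u - \tfrac{1}{2}t^2(\partial_t-\Delta)u + \cdots$) and then counts the kernel by rank--nullity, using $\dim \cA^n_p - \dim \cA^n_{p-2} = \dim A^n_p$. You instead exhibit a direct linear isomorphism $A^n_j \to \cP_p(\RR^n)\cap\cA^n_j$, namely $q_0 \mapsto \sum_i \tfrac{t^i}{i!}\Delta^i q_0$ (the formal heat flow $e^{t\Delta}q_0$), with inverse $u\mapsto u(\cdot,0)$; this is a bit more constructive and makes explicit how caloric polynomials are generated from their initial data. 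You also spell out the preliminary fact that every $u\in\cP_p(\RR^n)$ is a polynomial of parabolic degree at most $p$, which the paper simply cites as classical. One small correction there: in the decomposition $u=\sum_j t^j p_j(x)$ it is not true that every $p_j$ is harmonic --- only the top coefficient is, while $\Delta p_j=(j+1)p_{j+1}$ for the others. The conclusion that each $p_j$ is a polynomial still holds, but by downward induction: $p_k$ is a polynomially growing harmonic function and hence a polynomial by Liouville; then $\Delta p_{k-1}$ is a polynomial, so $p_{k-1}$ differs from a polynomial particular solution by a polynomially growing harmonic function and is hence a polynomial; and so on.
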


\begin{proof}
Observe that $\partial_t$ and $\Delta$ map $\cA^n_p$ to $\cA^n_{p-2}$.  Moreover, given any $u \in \cA^n_{p-2}$, we have 
\begin{align}
	(\partial_t - \Delta) \, \left[ t \, u - \frac{1}{2} \, t^2 (\partial_t - \Delta) u + \frac{1}{6} \, t^3  (\partial_t - \Delta)^2 u - \dots \right] = u \, .
\end{align}
Therefore, the map 
$
	(\partial_t - \Delta):\cA^n_p \to \cA^n_{p-2}
$
 is onto. Since the kernel of this map is $\cP_p (\RR^n) \cap  \cA^n_p $, we conclude that
 \begin{align}
 	\dim \left(  \cP_p (\RR^n) \cap  \cA^n_p   \right) = \dim \cA^n_p - \dim \cA^n_{p-2} =  \dim A^n_p \, .
 \end{align}
  This gives both  claims.
 \end{proof}

\begin{Lem}      \label{l:usingthis}
If $p \geq n$, then 
\begin{align}	\label{e:usingthis}
	\frac{1}{(n-1)!} \, p^{n-1} \leq \dim \, A_p^n   \leq  \frac{2^{n-1}}{(n-1)!} \, p^{n-1}  \, .
\end{align}
 \end{Lem}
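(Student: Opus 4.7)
The plan is to evaluate $\dim A_p^n$ exactly via a standard combinatorial count and then sandwich the result between the two claimed bounds using a factor-by-factor comparison of the numerator.

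First, a basis of $A_p^n$ is given by the monomials $x_1^{m_1}\cdots x_n^{m_n}$ with $m_i\geq 0$ and $m_1+\cdots+m_n=p$. The standard stars-and-bars enumeration yields
\[
\dim A_p^n=\binom{p+n-1}{n-1}=\frac{(p+1)(p+2)\cdots(p+n-1)}{(n-1)!},
\]
so the numerator is a product of $n-1$ consecutive integers, each lying in the interval $[p+1,\,p+n-1]$.

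For the lower bound, every factor is at least $p$, giving a numerator at least $p^{n-1}$ and hence $\dim A_p^n\geq p^{n-1}/(n-1)!$. For the upper bound I would invoke the hypothesis $p\geq n$: each factor then satisfies $p+j\leq p+n-1\leq 2p-1<2p$, so the numerator is bounded by $(2p)^{n-1}=2^{n-1}p^{n-1}$, giving $\dim A_p^n\leq 2^{n-1}p^{n-1}/(n-1)!$.

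There is no real obstacle here. The hypothesis $p\geq n$ is used precisely to control the largest factor $p+n-1$ by $2p$, which is what produces the constant $2^{n-1}$. One could of course refine the upper constant to $1+o(1)$ in the regime $p\gg n$, but no such sharpening is needed for the applications of this lemma.
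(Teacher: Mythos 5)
Your proof is correct and follows essentially the same route as the paper: both compute $\dim A_p^n = \binom{p+n-1}{n-1}$ by stars-and-bars and bound the product $(p+1)\cdots(p+n-1)$ below by $p^{n-1}$ and above by $(2p)^{n-1}$, using $p\geq n$ to get $p+n-1<2p$. The only cosmetic difference is that the paper writes the numerator as $(p+n-1)!/p!$ and bounds it by $(p+n-1)^{n-1}$, while you bound each factor individually.
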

 
 \begin{proof}
 To get the upper bound, we use that $p \geq n$ to get
 \begin{align}	 
	  \dim \,  A_p^n = \frac{ (p+n-1)!}{p! \, (n-1)!}  \leq   \frac{(p+n-1)^{n-1}}{(n-1)!}  \leq \frac{(2p)^{n-1}}{(n-1)!} = \frac{2^{n-1}}{(n-1)!} \, p^{n-1} \, .
\end{align}
The lower bound follows similarly since
$ 
	 \frac{ (p+n-1)!}{p! \, (n-1)!}  \geq   \frac{p^{n-1}}{(n-1)!}$.
	 \end{proof}

 The dimension bounds for $\cP_d (\RR^n)$ in \eqr{e:cpdrn}  follow  by combining Lemmas    \ref{l:gotit} and  \ref{l:usingthis}.

\subsection{Harmonic polynomials}

For each $j$, the Laplacian gives a linear map $\Delta: A^n_{j} \to A^n_{j-2}$.  The kernel   $H^n_j \subset A^n_j$ of this map is the linear space of 
 homogeneous harmonic polynomials of degree $j$ on $\RR^n$.  The next lemma shows that this map is onto:
 
 \begin{Lem}	\label{l:deltaonto}
 For each $d$, the map  $\Delta: A^n_{d+2} \to A^n_{d}$ is onto.
 \end{Lem}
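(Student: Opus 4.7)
The plan is to prove this by a finite-dimensional duality argument: equip the spaces $A^n_p$ with a non-degenerate inner product with respect to which the adjoint of $\Delta$ is manifestly injective, and then use the standard fact that a linear map between finite-dimensional inner product spaces is surjective iff its adjoint is injective. The right pairing is the Fischer (or ``apolar'') inner product on $A^n_p$, defined on monomials by $\langle x^\alpha, x^\beta\rangle := \delta_{\alpha\beta}\,\alpha!$ and extended bilinearly; equivalently, $\langle P,Q\rangle = (P(\partial)Q)(0)$, where $P(\partial)$ substitutes $\partial_{x_i}$ for $x_i$ in $P$. This pairing is clearly positive-definite.

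The key computation, which I would verify by a direct monomial check, is that $\partial_{x_i}$ and multiplication by $x_i$ are mutually adjoint with respect to this pairing, i.e.\
\begin{equation*}
\langle \partial_{x_i} P, Q\rangle = \langle P, x_i\, Q\rangle
\end{equation*}
for $P \in A^n_{p+1}$ and $Q \in A^n_p$. Summing over $i$, the adjoint of $\Delta = \sum_i \partial_{x_i}^2 : A^n_{d+2} \to A^n_d$ is multiplication by $|x|^2 = \sum_i x_i^2 : A^n_d \to A^n_{d+2}$.

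Finally, multiplication by $|x|^2$ is visibly injective on polynomials because $\RR[x_1,\dots,x_n]$ is an integral domain and $|x|^2 \not\equiv 0$. Since $A^n_{d+2}$ and $A^n_d$ are finite-dimensional and the Fischer pairing is non-degenerate on each, $\Delta$ being the adjoint of an injective map forces $\Delta$ to be surjective, proving the lemma.

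I do not expect any real obstacle; the argument is essentially formal once the right inner product is in hand. The only point requiring care is the factorial normalization $\|x^\alpha\|^2 = \alpha!$ in the definition of the pairing, which is exactly what makes $\partial_i$ and $x_i$ adjoint and hence what makes the whole scheme work.
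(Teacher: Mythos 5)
Your proof is correct, but it takes a genuinely different route from the paper. You argue by duality: equip each $A^n_p$ with the Fischer inner product $\langle P,Q\rangle = (P(\partial)Q)(0)$, check that $\partial_{x_i}$ and multiplication by $x_i$ are mutually adjoint (your monomial computation $\alpha_i(\alpha-e_i)! = (\beta+e_i)!$ when $\alpha=\beta+e_i$ is exactly what the factorial normalization is designed to make work), conclude that $\Delta^{*}$ is multiplication by $|x|^2$, which is injective since $\RR[x_1,\dots,x_n]$ is a domain, and then invoke the finite-dimensional rank equality $\dim\operatorname{im}\Delta = \dim\operatorname{im}\Delta^{*}$. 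The paper instead constructs an explicit preimage: starting from $u\in A^n_d$, it sets $u_\ell = |x|^{2\ell}\Delta^\ell u$ and $v_\ell = |x|^2 u_\ell$, derives the triangular recursion $\Delta v_\ell = c_\ell u_\ell + u_{\ell+1}$ with $c_\ell = (\ell+1)(2n+4d-4\ell) > 0$, and solves it to exhibit a $v\in A^n_{d+2}$ with $\Delta v$ a nonzero multiple of $u$. Your approach is shorter, more conceptual, and sidesteps the bookkeeping of the recursion; the paper's approach is elementary and self-contained (no inner product needed), and as a byproduct yields an explicit formula for a preimage, which can be useful if one later wants to track how fast inverses grow. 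Both are classical and both immediately give $\dim H^n_j = \dim A^n_j - \dim A^n_{j-2}$, as needed for Corollary~\ref{l:harmR}.
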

 
 \begin{proof}
 Take an arbitrary $u \in A^n_{d}$.  For each nonnegative $\ell \leq d/2$, define $u_{\ell}$ and $v_{\ell}$ by
\begin{align}
	u_{\ell} &=   |x|^{2\ell} \, \Delta^{\ell} u \, , \\
	v_{\ell} & = |x|^2 \, u_{\ell} =    |x|^{2\ell+2} \, \Delta^{\ell} u \, .
\end{align}
Note that $u_0 = u$.  
 We will use repeatedly that if $v \in A^n_k$, then homogeneity gives
\begin{align}	\label{e:homogk}
	\langle x , \nabla v \rangle = k \, v \, .
\end{align}
Using this and $\Delta \, |x|^2 = 2n$, we get   for each $\ell$ that
\begin{align}
	\Delta \, v_{\ell} &= (\ell +1) \, (2n + 4 \ell) \, |x|^{2\ell} \, \Delta^{\ell} u + 2 \, \langle \nabla  |x|^{2(\ell +1)} , \nabla  \Delta^{\ell} u \rangle + 
		  |x|^{2(\ell +1)} \, \Delta^{\ell +1 } u \notag \\
		  &=  (\ell +1) \, (2n + 4 \ell) \, |x|^{2\ell} \, \Delta^{\ell} u + 4 \, (\ell +1) \, (d-2\ell)  \,   |x|^{2\ell } \,  \Delta^{\ell} u  + 
		  |x|^{2(\ell +1)} \, \Delta^{\ell +1 } u \\
		  &= (\ell +1) \, \left( 2n + 4d - 4\ell \right) \,  u_{\ell} + u_{\ell +1 } 
		  \notag 
		  \, .
\end{align}
Thus, if we define positive constants $c_{\ell} =  (\ell +1) \, \left( 2n + 4d - 4\ell \right)$, then we have that
\begin{align}	\label{e:uppertriangular}
	\Delta v_{\ell} = c_{\ell} \, u_{\ell} + u_{\ell +1} \, .
\end{align}
Let $k$ be the greatest integer less than or equal to $\frac{d}{2}$.  Note that $u_{k+1}=v_{k+1} \equiv 0$.
It follows from this and \eqr{e:uppertriangular} that
\begin{align}
	\Delta \, \left( v_k - c_{k} \, v_{k-1} + c_k \, c_{k-1} \, v_{k-2} - c_k \, c_{k-1} \, c_{k-2} \, v_{k-3} + \dots \right) 
\end{align}
is a nonzero multiple of $u_0 = u$, giving the lemma.
 \end{proof}

\begin{Cor}	\label{l:harmR}
For each positive integer $k$, we have  $\dim H^n_{k} = \dim A^n_{k} - \dim A^n_{k-2}$ and
\begin{align}	\label{e:tscmf}
	\dim \, \cH_{k} (\RR^n) = \dim A^n_{k} + \dim A^n_{k-1} \, .
\end{align}
\end{Cor}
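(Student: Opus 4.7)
The plan is to combine Lemma \ref{l:deltaonto} with the rank--nullity theorem to get the first equality, identify $\cH_{k}(\RR^n)$ with the space of harmonic polynomials of degree at most $k$, and then telescope the resulting sum of dimensions.

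First I would observe that, by definition, $H^n_{k}$ is the kernel of the linear map $\Delta : A^n_k \to A^n_{k-2}$. Lemma \ref{l:deltaonto} says this map is onto, so rank--nullity gives immediately $\dim H^n_k = \dim A^n_k - \dim A^n_{k-2}$, which is the first assertion.

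Next I would identify $\cH_k(\RR^n)$ with harmonic polynomials of degree at most $k$. Given $u \in \cH_k(\RR^n)$, every partial derivative $\partial^\alpha u$ is again harmonic, and standard interior derivative estimates on Euclidean balls (applied iteratively using the mean value property) show that $\sup_{B_R} |\partial^\alpha u| = O(R^{k - |\alpha|})$. Taking $|\alpha| = k+1$ yields a harmonic function on $\RR^n$ tending to $0$ at infinity, which must vanish identically by Liouville. Hence $u$ is a polynomial of degree at most $k$. Writing $u = \sum_{j=0}^{k} u_j$ with $u_j \in A^n_j$, the relation $0 = \Delta u = \sum_j \Delta u_j$ together with the grading (each $\Delta u_j \in A^n_{j-2}$) forces $\Delta u_j = 0$ for every $j$. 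Conversely, each $H^n_j$ with $j \leq k$ obviously sits inside $\cH_k(\RR^n)$, giving
\begin{align}
	\cH_k(\RR^n) = \bigoplus_{j=0}^{k} H^n_j \, .
\end{align}

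Finally I would combine the two ingredients: using the first part,
\begin{align}
	\dim \cH_k(\RR^n) \,=\, \sum_{j=0}^{k} \dim H^n_j \,=\, \sum_{j=0}^{k} \bigl( \dim A^n_j - \dim A^n_{j-2} \bigr) \,,
\end{align}
with the convention $\dim A^n_{-1} = \dim A^n_{-2} = 0$. After reindexing the second sum, the two sums almost cancel, leaving exactly $\dim A^n_k + \dim A^n_{k-1}$, which is \eqr{e:tscmf}. The only step with any real content is the Liouville-type reduction to polynomials; the rank--nullity and telescoping steps are purely formal once that is in hand.
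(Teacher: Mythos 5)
Your proof is correct and follows essentially the same route as the paper: rank--nullity applied to the surjection $\Delta : A^n_k \to A^n_{k-2}$ from Lemma~\ref{l:deltaonto} gives the first identity, and summing/telescoping gives \eqr{e:tscmf}. The paper compresses the second step into ``summing the first claim,'' taking for granted the classical fact that $\cH_k(\RR^n)$ is exactly the span $\bigoplus_{j\le k} H^n_j$ of homogeneous harmonic polynomials; you supply that fact explicitly via interior derivative estimates and Liouville, which is a reasonable and correct way to fill in what the paper treats as standard.
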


\begin{proof}
Note that $\Delta: A^n_{j} \to A^n_{j-2}$ gives a linear map with kernel equal to $H^n_{j}$.  The map is onto by Lemma
\ref{l:deltaonto},  giving the first claim.  Summing the first claim gives \eqr{e:tscmf}.
\end{proof}

  \begin{Cor}	\label{c:c218}
  For each  $k$, \eqr{e:0p4} is an equality on $\RR^n$.
	  \end{Cor}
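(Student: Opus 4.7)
The plan is to compute both sides of \eqref{e:0p4} on $\RR^n$ explicitly using the dimension identities established for caloric and harmonic polynomials in the preceding lemmas, and observe that they coincide.

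First I would evaluate the left-hand side. Since $\cP_{2k}(\RR^n)$ is spanned by caloric polynomials, Lemma \ref{l:gotit} applied with $p = 2k$ gives the clean formula
\begin{align}
\dim \cP_{2k}(\RR^n) = \sum_{j=0}^{2k} \dim A^n_j \, .
\end{align}

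Next I would evaluate the right-hand side. For $i \geq 1$, Corollary \ref{l:harmR} yields $\dim \cH_{2i}(\RR^n) = \dim A^n_{2i} + \dim A^n_{2i-1}$, while for $i=0$ we have $\dim \cH_0(\RR^n) = 1 = \dim A^n_0$ (constants). Summing these,
\begin{align}
\sum_{i=0}^{k} \dim \cH_{2i}(\RR^n) = \dim A^n_0 + \sum_{i=1}^{k} \bigl( \dim A^n_{2i} + \dim A^n_{2i-1} \bigr) = \sum_{j=0}^{2k} \dim A^n_j \, ,
\end{align}
where I have simply reindexed, pairing the even index $2i$ and the odd index $2i-1$ for each $i \geq 1$ to recover every $j$ between $0$ and $2k$ exactly once.

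Comparing the two displays shows both sides equal $\sum_{j=0}^{2k} \dim A^n_j$, so equality holds in \eqref{e:0p4}. There is no real obstacle here beyond bookkeeping; the substantive work has already been done in Lemmas \ref{l:gotit} and \ref{l:deltaonto} and Corollary \ref{l:harmR}, which respectively identify $\dim \cP_p(\RR^n)$ and $\dim \cH_k(\RR^n)$ as telescoping sums of $\dim A^n_j$. The only point worth flagging is the treatment of the $i=0$ term, where the Corollary's formula degenerates and one must use $\dim \cH_0(\RR^n) = \dim A^n_0$ directly to make the reindexing come out right.
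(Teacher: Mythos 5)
Your proof is correct and is essentially the same as the paper's: both evaluate the two sides using Lemma \ref{l:gotit} and Corollary \ref{l:harmR} and observe they equal $\sum_{j=0}^{2k}\dim A^n_j$. Your explicit handling of the $i=0$ term is a small clarification (the paper tacitly sets $\dim A^n_{-1}=0$), but there is no substantive difference.
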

  
  \begin{proof}
  Corollary \ref{l:harmR}   and  Lemma \ref{l:gotit}  give
    \begin{align}	\label{e:hr1}
  	 \sum_{j=0}^k \dim \cH_{2j} (\RR^n) =   	 \sum_{j=0}^k  \left(   \dim A^n_{2j} + \dim A^n_{2j-1}    \right)  =  \sum_{i=0}^{2k} \dim  A^n_i = 	\dim \, \cP_{2k} (\RR^n) 
	 \, .
  \end{align}
  \end{proof}


\begin{thebibliography}{A}

 
 
  
  
\bibitem[Ca1]{Ca1} 
M. Calle, \emph{Bounding dimension of ambient space by density for mean curvature flow}. 
Math. Z. 252 (2006), no. 3, 655--668.

\bibitem[Ca2]{Ca2} 
M. Calle, \emph{Mean curvature flow and minimal surfaces}. Thesis (Ph.D.)--New York University. 2007.


 

 

 \bibitem[ChCM]{ChCM}
J. Cheeger, T.H. Colding, and W.P. Minicozzi II, {\it Linear
growth harmonic functions on complete manifolds with nonnegative
Ricci curvature}, Geom. Funct. Anal. 5 (1995), no. 6, 948--954.

 

\bibitem[CgYa]{CgYa}
S.Y. Cheng and S.T. Yau, {\it Differential equations on Riemannian
manifolds and their geometric applications}, Comm. Pure Appl.
Math. 28 (1975) 333--354.








\bibitem[CM1]{CM1}
T.H. Colding and W.P. Minicozzi II, \textit{Harmonic functions
with polynomial growth}, J. Diff. Geom.,  v. 46, no. 1 (1997)
1--77.
\bibitem[CM2]{CM2}
T.H. Colding and W.P. Minicozzi II, \textit{Harmonic functions on
manifolds}, Ann. of Math. (2), 146, no. 3 (1997) 725--747.
\bibitem[CM3]{CM3}
T.H. Colding and W.P. Minicozzi II, \textit{Weyl type bounds for
harmonic functions}, Inventiones Math., 131 (1998) 257--298.
\bibitem[CM4]{CM4}
T.H. Colding and W.P. Minicozzi II, \textit{Liouville theorems for
harmonic sections and applications},  Comm. Pure Appl. Math., 52
(1998) 113--138.

\bibitem[CM5]{CM5}
T.H. Colding and W.P. Minicozzi II, \textit{Minimal surfaces},
 Courant Lecture Notes in Mathematics, 4. New York University, Courant Institute of Mathematical Sciences, New York, 1999.
 
 \bibitem[CM6]{CM6}
T.H. Colding and W.P. Minicozzi II, \emph{Complexity of parabolic systems}, 
Publ. Math. Inst. Hautes \'Etudes Sci. 132 (2020), 83--135.


 \bibitem[CM7]{CM7}
T.H. Colding and W.P. Minicozzi II, 
 \textit{Liouville properties},  ICCM Not. 7 (2019), no. 1, 16--26. 

\bibitem[CM8]{CM8}
T.H. Colding and W.P. Minicozzi II, 
 \textit{In search of stable geometric structures},  Notices of the AMS, December (2019),  1785--1791.

\bibitem[DF]{DF}
H. Donnelly and C. Fefferman, {\it Nodal domains and growth of
harmonic functions on noncompact manifolds},  J. Geom. Anal. 2
(1992) 79-93.

\bibitem[E1]{E1}
S.D. 
Eidelman, {\it Estimates of solutions of parabolic systems and some of their applications}. Math. Sbornik 33, 359--382 (1953).  
 
 
 
\bibitem[E2]{E2}
S.D. 
Eidelman, {\it Liouville-type theorems for parabolic and elliptic systems}. Doklady AN SSSR 99, 681--684 (1954).  

\bibitem[G]{G}
M. Gromov, \emph{Groups of polynomial growth and expanding maps}. IHES Publ. Math. No. 53 (1981), 53--73.




\bibitem[Ka]{Ka} 
A. Kasue, \emph{Harmonic functions of polynomial growth on complete manifolds}. 
Differential geometry: partial differential equations on manifolds (Los Angeles, CA, 1990), 281--290, 
Proc. Sympos. Pure Math., 54, Part 1, Amer. Math. Soc., Providence, RI, 1993. 

 

\bibitem[Kz]{Kz}
J. Kazdan, \emph{Parabolicity and the Liouville property on complete Riemannian manifolds}, Aspects of Math.  Vieweg: Braunschweig (1987), 153--166. 

\bibitem[K]{K}
B. Kleiner, \emph{A new proof of Gromov's theorem on groups of polynomial growth}. J. Amer. Math. Soc. 23 (2010), no. 3, 815--829.

\bibitem[KoT]{KoT}
A. 
Kor\'anyi and J. C. Taylor, 
{\it Minimal solutions of the heat equation and uniqueness of the positive Cauchy problem on homogeneous spaces}, 
Proc. Amer. Math. Soc. 94 (1985), no. 2, 273--278. 
 
 

   \bibitem[Li1]{Li1}
P. Li, {\it The theory of harmonic functions and its relation to geometry}, Proceedings of Symposia in Pure Mathematics Vol 54, Part 1, Ed. R. Greene and S.T. Yau.

   \bibitem[Li2]{Li2}
P.  Li, {\it Linear growth harmonic functions on K\"ahler
manifolds with non--negative Ricci curvature}, Math. Res. Lett. 2
(1995) 79--94.



\bibitem[LiTa]{LiTa}
P.  Li and L.F. Tam, {\it Linear growth harmonic functions on a
complete manifold}, J. Diff. Geom. 29 (1989) 421--425.

 
\bibitem[LiY]{LiY}
P. Li and S.T. Yau,  {\it On the parabolic kernel of the Schr\"odinger operator}. Acta Math. 156 (1986), no. 3-4, 153--201. 




\bibitem[LZ]{LZ}
F.H. Lin and Q.S. Zhang, 
{\it On ancient solutions of the heat equation}, CPAM, (2019) Vol. LXXII, 2006--2028.



\bibitem[M]{M}
J. Moser, {\it A Harnack inequality for parabolic differential equations}. CPAM 17 (1964), 101--134.


\bibitem[MS]{MS}
J. Moser and M. Struwe, {\it On a Liouville--type theorem for
linear and nonlinear elliptic differential equations on a torus},
Bol.   Soc. Bra. Mat.   23 (1992) 1--20.
  
 \bibitem[N]{N}
M. Nicolescu, {\it Sur le \'equation de la Chaleur}. Commen. Math. Helvetici 10 (1937), 3--17.

\bibitem[Sc]{Sc}
R. Schoen, \emph{The effect of curvature on the behavior of harmonic functions and mappings}. Nonlinear partial differential equations in differential geometry (Park City, UT, 1992), 127--184, IAS/Park City Math. Ser., 2, Amer. Math. Soc., Providence, RI, 1996.


\bibitem[ST]{ST}
Y. Shalom and T. Tao, \emph{A finitary version of Gromov's polynomial growth theorem}. Geom. Funct. Anal. 20 (2010), no. 6, 1502--1547.

 
\bibitem[SoZ]{SoZ}
 P. Souplet  and Q. S.  Zhang, {\it{Sharp gradient estimate and Yau's Liouville theorem for the heat equation on noncompact manifolds}},
  Bull. London Math. Soc. 38 (2006), no. 6, 1045--1053.
  



\bibitem[T1]{T1} 
T. Tao, \emph{Kleiner's proof of Gromov's theorem}, 
terrytao.wordpress.com/2008/02/14/kleiners-proof-of-gromovs-theorem/

\bibitem[T2]{T2} 
T. Tao, \emph{A proof of Gromov's theorem}, 
terrytao.wordpress.com/2010/02/18/a-proof-of-gromovs-theorem/


 
    \bibitem[Ya1]{Ya1}
    S.T. Yau, {\it Harmonic functions on complete Riemannian
manifolds}, CPAM 28 (1975) 201--228.
  \bibitem[Ya2]{Ya2}
S.T. Yau, {\it Nonlinear analysis in geometry},  L'Eseignement
Mathematique (2) 33 (1987) 109--158.
 \bibitem[Ya3]{Ya3} 
S.T. Yau,   {\it Open problems in geometry}, Proc. Sympos. Pure
Math., 54, Part 1, AMS,
1993.





\end{thebibliography}
\end{document}